\newcommand{\risaasir}{{\sf risa/asir}}
\newcommand{\AAA}{{\mathcal A}}
\newcommand{\kk}{\mathbb{K}}
\newcommand{\ZZ}{\mathbb{Z}}
\newcommand{\NN}{\mathbb{N}}
\newcommand{\der}{\partial}
\newcommand{\numof}[1]{\left| #1 \right|}
\newcommand{\Oru}[3]{\substack{#1 \\ #2 \\ #3}}
\newcommand{\supp}{\operatorname{supp}}
\newcommand{\thirdalgo}{\operatorname{\tt ALG3}}
\newcommand{\secondalgo}{\operatorname{\tt ALG2}}
\newcommand{\firstalgo}{\operatorname{\tt ALG1}}
\newcommand{\defit}[1]{{\em #1}}
\theoremstyle{plain}
\newtheorem{thm}{Theorem}[section]
\newtheorem{lemma}[thm]{Lemma}
\newtheorem{cor}[thm]{Corollary}
\newtheorem{theorem}[thm]{Theorem}
\newtheorem{proposition}[thm]{Propsition}
\theoremstyle{remark}
\newtheorem{remark}[thm]{Remark}
\theoremstyle{definition}
\newtheorem{definition}[thm]{Definition}
\begin{document}

\title[An algorithm to construct basis]{
An Algorithm to Construct A Basis for the Module of Logarithmic Vector Fields
}
\author[Numata,y.]{Numata, yasuhide}

\begin{abstract}
We consider logarithmic vector fields 
parametrized by finite collections of weighted hyperplanes.
For a finite collection of weighted hyperplanes in a two-dimensional
 vector space, it is known that
the set of such vector fields is
a  free module of rank two whose basis elements are
homogeneous.
We give an algorithm to construct a homogeneous basis for the module.
\end{abstract}

\maketitle

\section{Introduction}
For an $l$-dimensional vector space  $V$,
a finite collection $\AAA$ of subspaces of  $V$  whose codimensions
are one is called \defit{a central hyperplane arrangement} in $V$.
A pair $(\AAA, \mu)$ of a central hyperplane arrangement in $V$
and a map $\mu:\AAA \to \ZZ_{>0}$ is called a \defit{multiarrangement} 
in $V$. 
Let $V^{\ast}$ be the dual space of $V$, and $S$ 
the algebra  of polynomial functions on $V$.
The algebra $S$ has a natural graded structure.
For a multiarrangement $(\AAA,\mu)$ in $V$,
we define $D(\AAA,\mu)$ to be
the set of derivations $\theta$ of $S$ satisfying the following condition:
$\theta(\alpha)$ is in the ideal generated by $\alpha^{\mu(\ker(\alpha))}$
for each $\alpha \in S$ with $\ker(\alpha)\in \AAA$.
The set  $D(\AAA,\mu)$  also has a natural structure of a graded $S$-module.
We say that a multiarrangement $(\AAA,\mu)$ is free if 
the corresponding module $D(\AAA, \mu)$ is free.
For a free multiarrangement  $(\AAA,\mu)$, 
 $D(\AAA,\mu)$  has a homogeneous basis.
The multi-set of degrees of a homogeneous basis for  $D(\AAA,\mu)$ 
is  called the exponents of $(\AAA,\mu)$. 

Ziegler showed that $D(\AAA,\mu)$ is free for 
a multiarrangement $(\AAA,\mu)$ in  a two-dimensional vector space \cite{z}.
Yoshinaga proved a theorem which characterizes the free
multiarrangements in a
three-dimensional vector space.
In the theorem, exponents of multiarrangements in  a two-dimensional
vector space play an important role \cite{y}.
For some multiarrangements $(\AAA,\mu)$ in  a two-dimensional vector space,
homogeneous bases for $D(\AAA,\mu)$ are explicitly given.
Hence exponents for these multiarrangements are also 
 explicitly given. (See \cite{st, w} and so on.)  

The set of multiarrangements has a structure of a graded poset. 
(See Section \ref{notations}.)
The mapping $(\AAA,\mu)$ to $D(\AAA,\mu)$ is order-reversing.
In this paper, for multiarrangements $(\AAA,\mu)$ and  $(\AAA,\mu')$ in  a
two-dimensional vector space 
such that $(\AAA,\mu')$ is larger than $(\AAA,\mu)$,
we show an algorithm to construct
a homogeneous basis for $D(\AAA,\mu')$ from
a homogeneous basis for $D(\AAA,\mu)$.

In Section \ref{notations}, we recall the definition
of the module of logarithmic vector fields and K. Saito's criterion.
In Section \ref{mainsec}, we show algorithms to construct a
homogeneous basis for $D(\AAA,\mu)$
for a multiarrangement $(\AAA,\mu)$ in  a
two-dimensional vector space.
In Section \ref{applysec}, we show some applications of our algorithms.
In Subsection \ref{applysec:dif}, we consider
rise and fall of the difference of exponents.
In Subsection \ref{applysec:finite}, 
we consider the case where the base field is finite, 
and explicitly describe 
a homogeneous basis for some $D(\AAA,\mu)$.
In Subsection \ref{applysec:implementation}, 
 an implementation of our algorithms as a program of the computer
algebra system \risaasir{} is outlined.

\section{Definition and Notation}\label{notations}
Let $\kk$ be a field, $V$ a two-dimensional vector space over $\kk$,
and $S$ a polynomial ring $\kk[x,y]$.
The algebra $S$ naturally has a structure $S=\bigoplus_{i\in\NN} S_i$
of a graded algebra, where $S_i$ is a vector space whose basis
is $\Set{x^j y^{i-j}| j=0,\ldots,i}$.

We call a finite collection $\AAA$ of hyperplanes containing the origin
in $V$ a \defit{central arrangement}.
A pair $(\AAA,\mu)$ of a central arrangement $\AAA$ and a map 
$\mu$ from $\AAA$ to the set of positive integers $\ZZ_{>0}$
is called  \defit{a multiarrangement}.
We identify a multiarrangement with
a map $\mu$ from the set of  hyperplanes in $V$ 
containing the origin
to the set of nonnegative integers $\NN$ 
whose support $\supp(\mu)=\Set{H | \mu(H) \neq 0}$ is a finite set.
We write $\numof{\mu}$ for $\sum_{H} \mu(H)$.
We define $(\AAA, \mu) \subset (\AAA', \mu')$ 
if $\mu$ and $\mu'$ satisfy $\mu(H) \leq \mu'(H)$ for all $H \in \AAA$.
The set of multiarrangements has
a structure of a graded poset with the minimum element $(\emptyset, 0)$
by the relation $\subset$, where 
$0$ is the map such that
$0(H)=0$ for all $H$.

\begin{definition}
For a multiarrangement $(\AAA,\mu)$,
we define the set
$D(\AAA,\mu)$ of logarithmic vector fields to be
\begin{gather*}
\Set{ \theta = f(x,y) \der_x + g(x,y) \der_y | 
\Oru{\text{$f(x,y),g(x,y)\in S.$}}{\text{$\theta(\alpha)$ is in the
 ideal $\left(\alpha^{\mu(\ker(\alpha))}\right) $ }}{\text{for each  
$\alpha \in S_{1}\setminus\Set{0}$.}}},
\end{gather*}
where $\der_x$ and $\der_y$ respectively denote the partial differential
 operators in the variables $x$ and $y$. 
\end{definition}

For a multiarrangement $(\AAA,\mu)$,
$D(\AAA,\mu)$ has 
a structure $D(\AAA,\mu)=\bigoplus_{i\in \NN} D(\AAA,\mu)_i$ of 
a graded 
 $S$-module, where 
\begin{gather*}
D(\AAA,\mu)_i = \Set{  f(x,y) \der_x + g(x,y) \der_y \in D(\AAA,\mu)| 
f(x,y), g(x,y) \in S_i }.
\end{gather*}

The mapping $(\AAA,\mu) \mapsto D(\AAA,\mu)$
is order-reversing.
Namely,
by definition,
$D(\AAA,\mu) \subset D(\AAA',\mu')$
for multiarrangements $(\AAA',\mu') \subset (\AAA,\mu)$.

We consider 
only the case of two-dimensional vector spaces in this paper.
In this case, it is known that
$D(\AAA,\mu)$ is always  a free $S$-module whose rank is two, and that
$D(\AAA,\mu)$ has a homogeneous basis.
We give an algorithm to construct a basis for $D(\AAA,\mu)$ in this paper. 
The following  is a well-known criterion.

\begin{theorem}[K. Saito's criterion \cite{s}]\label{saitoscriterion}
Let $\Set{\theta_i \in D(\AAA,\mu)_{d_i}|i=1,2}$ 
be $S$-linearly independent.
Then $\Set{\theta_1,\theta_2}$
is a basis for $D(\AAA,\mu)$
if and only if
$\numof{\mu} = \sum_i d_i$.
\end{theorem}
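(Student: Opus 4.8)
The plan is to reduce everything to a single divisibility fact about the coefficient determinant and then read off both implications from a comparison of degrees. Write $\theta_i = f_i\der_x + g_i\der_y$ and set $W = f_1 g_2 - f_2 g_1$, the determinant of the coefficient matrix, and let $Q = \prod_{H\in\supp(\mu)}\alpha_H^{\mu(H)}$, where $\alpha_H\in S_1$ is a defining form for $H$; note $\deg Q = \numof{\mu}$. The cornerstone is the claim that $Q$ divides $W$ for \emph{every} pair $\theta_1,\theta_2\in D(\AAA,\mu)$. To see this, fix $H\in\supp(\mu)$ and choose linear coordinates in which $\alpha_H = x$; since a linear change of coordinates only multiplies $W$ and $Q$ by nonzero scalars, the divisibility statement is coordinate-free. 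In these coordinates $\theta_i(x) = f_i$, so the defining condition forces $x^{\mu(H)}\mid f_1$ and $x^{\mu(H)}\mid f_2$, whence $x^{\mu(H)}\mid W$. As the forms $\{\alpha_H\}_{H\in\supp(\mu)}$ are pairwise non-associate irreducibles, their powers are coprime and the product $Q$ divides $W$.

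For the sufficiency direction, assume $\theta_1,\theta_2$ are independent and $\numof{\mu} = d_1 + d_2$. Independence means $W\neq 0$, while homogeneity gives $\deg W = d_1 + d_2 = \numof{\mu} = \deg Q$; combined with $Q\mid W$ this yields $W = cQ$ for some $c\in\kk^\times$. Given an arbitrary $\theta = f\der_x + g\der_y\in D(\AAA,\mu)$, I would solve $\theta = p\theta_1 + q\theta_2$ by Cramer's rule, so that $p$ and $q$ are the quotients by $W$ of the coefficient determinants of the pairs $(\theta,\theta_2)$ and $(\theta_1,\theta)$. Each of these pairs lies in $D(\AAA,\mu)$, so by the cornerstone its coefficient determinant is divisible by $Q$, hence by $W = cQ$; therefore $p,q\in S$ and $\{\theta_1,\theta_2\}$ spans $D(\AAA,\mu)$. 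Together with independence this makes it a basis, by definition.

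For the necessity direction I would exploit that $D(\AAA,\mu)$ is known to be free of rank two with a homogeneous basis. On one hand, applying the cornerstone to the basis $\{\theta_1,\theta_2\}$ (for which $W\ne 0$) gives $\deg W = d_1 + d_2 \ge \deg Q = \numof{\mu}$. On the other hand, consider for each large $i$ the linear map $\Phi_i\colon S_i^2 \to \bigoplus_{H\in\supp(\mu)} \bigl(S/(\alpha_H^{\mu(H)})\bigr)_i$ sending $(f,g)$ to the tuple of residues of $\theta(\alpha_H)$; by definition $D(\AAA,\mu)_i = \ker\Phi_i$. Since $\dim_\kk\bigl(S/(\alpha_H^{\mu(H)})\bigr)_i = \mu(H)$ for $i\ge \mu(H)$, the target has dimension $\numof{\mu}$ for $i\gg 0$, so $\dim_\kk D(\AAA,\mu)_i \ge 2(i+1) - \numof{\mu}$. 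Because the module is free with basis in degrees $d_1,d_2$, its graded dimension equals $2(i+1) - (d_1+d_2)$ for $i\gg 0$, forcing $d_1 + d_2 \le \numof{\mu}$. The two inequalities give $d_1 + d_2 = \numof{\mu}$.

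The only genuinely delicate point is the necessity half: the easy codimension bound $\dim\ker\Phi_i \ge \dim_\kk S_i^2 - \dim_\kk(\mathrm{target})$ must be paired with freeness to pin the degree sum from above, and it is freeness (Ziegler's theorem, assumed here) rather than any explicit basis that does the work. I expect the cornerstone divisibility lemma to be the technical heart; once it is established, both the Cramer computation and the dimension count are routine.
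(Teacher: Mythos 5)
The paper does not actually prove this theorem: it quotes K.\ Saito's criterion as a known result with a citation to \cite{s}, so there is no in-paper argument to measure yours against. Your proof is correct and is essentially the classical one (Saito's original argument, in Ziegler's multiarrangement form). The cornerstone divisibility $Q=\prod_{H}\alpha_H^{\mu(H)}\mid W$ is established soundly: a linear change of coordinates multiplies $W$ by the (nonzero) determinant of the substitution and rescales each $\alpha_H$ by a unit, so reducing to $\alpha_H=x$ and reading off $x^{\mu(H)}\mid f_i$ is legitimate, and coprimality of powers of pairwise non-associate linear forms in the UFD $S$ finishes it. The sufficiency half via Cramer's rule (noting that the auxiliary determinants for the pairs $(\theta,\theta_2)$ and $(\theta_1,\theta)$ are again divisible by $Q=c^{-1}W$) and the necessity half via the two opposite degree inequalities are both complete. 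One small point of emphasis: in the necessity direction you do not need Ziegler's freeness theorem as an external input --- the hypothesis of that implication already hands you a homogeneous basis $\{\theta_1,\theta_2\}$ in degrees $d_1,d_2$, which is exactly what your Hilbert-function comparison $2(i+1)-(d_1+d_2)\ \ge\ 2(i+1)-\numof{\mu}$ requires; so your closing remark that ``freeness rather than any explicit basis does the work'' slightly misattributes the load-bearing hypothesis, but this does not affect the validity of the argument.
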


\section{Main result}\label{mainsec}
In this section, we give  algorithms
 to construct a homogeneous basis for $D(\AAA,\mu)$.

First we consider the case where
 $\mu \subset \mu'$ and $\numof{\mu'}=\numof{\mu}+1$.
We show an algorithm to construct a homogeneous basis for
$D(\AAA',\mu')$ from 
a homogeneous basis for $D(\AAA,\mu)$.

\begin{theorem}\label{thm:algorithm1}
Let $(\AAA,\mu)$ be a multiarrangement. 
Fix $\alpha=\alpha_x x + \alpha_y y \in S_{1}\setminus\Set{0}$. 
Let $\AAA'$ be $\AAA \cup \ker(\alpha)$,
and $\mu'$ the map such that 
\begin{gather*}
\mu'(H)=
\begin{cases}
\mu(\ker(\alpha)) + 1 & (H=\ker(\alpha))\\
\mu(H) & ( H \in \AAA \setminus \Set{\ker(\alpha)}). 
\end{cases}
\end{gather*}

Then we can construct a homogeneous basis $(\theta'_1, \theta'_2)$ 
for $D(\AAA',\mu')$
from a homogeneous basis $(\theta_1,\theta_2)$ for $D(\AAA,\mu)$ by
the following algorithm $\firstalgo:$ 
\begin{description}
\item[Input] $\theta_1$, $\theta_2$, 
$\alpha=(\alpha_x x +\alpha_y y)$, $m=\mu(\ker(\alpha))$.
\item[Output] $\theta'_1$, $\theta'_2$.
\item[Procedure] \ 
\begin{enumerate} 
\item If $\deg(\theta_1) < \deg(\theta_2)$, 
then swap $\theta_1$ and $\theta_2$. 
\item Let $g(x,y)=\frac{\theta_2(\alpha)}{\alpha^{m}}$.
\item If $g(\alpha_y,-\alpha_x)=0$, then
let $\theta'_1=\alpha\cdot \theta_1$ and $\theta'_2=\theta_2$,
and finish the procedure.
\item Let $f(x,y)=\frac{\theta_1(\alpha)}{\alpha^{m}}$. 
\item If $f(\alpha_y,-\alpha_x)=0$, then
let $\theta'_1=\theta_1$ and $\theta'_2=\alpha\cdot \theta_2$,
and finish the procedure.
\item \label{alg1:step6} Let $q(x,y)$ be a homogeneous polynomial in variables $x$, $y$ of
 degree $\deg(\theta_1)-\deg(\theta_2)$ satisfying
the equation
\begin{gather*}
f(\alpha_y,-\alpha_x)+g(\alpha_y,-\alpha_x) q(\alpha_y,-\alpha_x)=0.
\end{gather*}
\item Let $\theta'_1=\theta_1+q(x,y)\cdot \theta_2$ and $\theta'_2=\alpha\cdot \theta_2$.
\end{enumerate}
\end{description}
\end{theorem}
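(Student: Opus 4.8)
The plan is to verify, in each of the three branches of $\firstalgo$, the hypotheses of K. Saito's criterion (Theorem \ref{saitoscriterion}) for the output pair. Since $(\theta_1,\theta_2)$ is already a basis for $D(\AAA,\mu)$, that criterion gives $\deg(\theta_1)+\deg(\theta_2)=\numof{\mu}$, and because $\numof{\mu'}=\numof{\mu}+1$ it suffices to check three things for $(\theta'_1,\theta'_2)$: that both lie in $D(\AAA',\mu')$, that they are $S$-linearly independent, and that $\deg(\theta'_1)+\deg(\theta'_2)=\numof{\mu}+1$. The degree count will be immediate from the construction, since each branch multiplies exactly one of the two vector fields by $\alpha$, raising its degree by one while keeping the pair homogeneous; in the last branch $q$ has degree $\deg(\theta_1)-\deg(\theta_2)$ precisely so that $\theta_1+q\theta_2$ remains homogeneous of degree $\deg(\theta_1)$.

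The first step I would isolate is the translation of the membership condition into a single scalar equation. The arrangements $(\AAA,\mu)$ and $(\AAA',\mu')$ differ only at $\ker(\alpha)$, where the required order of divisibility goes from $m$ to $m+1$, so for $\theta\in D(\AAA,\mu)$ one has $\theta\in D(\AAA',\mu')$ if and only if $\alpha$ divides $\theta(\alpha)/\alpha^{m}$. The key observation is that $(\alpha_y,-\alpha_x)$ spans $\ker(\alpha)$, so for a homogeneous polynomial $h$ the restriction to this line is $t\mapsto t^{\deg h}\,h(\alpha_y,-\alpha_x)$; hence $\alpha$ divides $h$ exactly when $h(\alpha_y,-\alpha_x)=0$ (this holds over any field, since $(\alpha)$ is the kernel of the surjection $S\to\kk[t]$ given by restriction to the line). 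Applying this to $f=\theta_1(\alpha)/\alpha^{m}$ and $g=\theta_2(\alpha)/\alpha^{m}$ shows that the tests $g(\alpha_y,-\alpha_x)=0$ and $f(\alpha_y,-\alpha_x)=0$ in Steps 3 and 5 are exactly the tests $\theta_2\in D(\AAA',\mu')$ and $\theta_1\in D(\AAA',\mu')$.

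With this dictionary the two short branches are routine. In Step 3, $\theta_2$ already lies in $D(\AAA',\mu')$, and $\theta'_1=\alpha\theta_1$ lies there because $(\alpha\theta_1)(\alpha)=\alpha\cdot\theta_1(\alpha)$ is divisible by $\alpha^{m+1}$ while the conditions at all other hyperplanes are preserved under multiplication by $\alpha$; Step 5 is symmetric. In the final branch $f(\alpha_y,-\alpha_x)$ and $g(\alpha_y,-\alpha_x)$ are scalars with $g(\alpha_y,-\alpha_x)\neq 0$, so I would first confirm that a homogeneous $q$ of degree $d=\deg(\theta_1)-\deg(\theta_2)\ge 0$ with the prescribed value $q(\alpha_y,-\alpha_x)=-f(\alpha_y,-\alpha_x)/g(\alpha_y,-\alpha_x)$ exists: as $(\alpha_y,-\alpha_x)\neq 0$, one coordinate is nonzero and a suitable scalar multiple of $x^{d}$ or $y^{d}$ works over any field. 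Then $\theta'_1=\theta_1+q\theta_2\in D(\AAA,\mu)$, and $\theta'_1(\alpha)=\alpha^{m}\bigl(f+qg\bigr)$ is divisible by $\alpha^{m+1}$ precisely because $(f+qg)(\alpha_y,-\alpha_x)=0$ by the choice of $q$; hence $\theta'_1\in D(\AAA',\mu')$, and $\theta'_2=\alpha\theta_2$ lies there as before.

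Finally I would dispatch linear independence uniformly: in each branch the output is obtained from $(\theta_1,\theta_2)$ by a transition matrix — $\left(\begin{smallmatrix}\alpha&0\\0&1\end{smallmatrix}\right)$, $\left(\begin{smallmatrix}1&0\\0&\alpha\end{smallmatrix}\right)$, or $\left(\begin{smallmatrix}1&q\\0&\alpha\end{smallmatrix}\right)$ — of determinant $\alpha\neq 0$, so independence of $(\theta_1,\theta_2)$ passes to $(\theta'_1,\theta'_2)$ because $S$ is an integral domain. Saito's criterion then upgrades each independent homogeneous pair of total degree $\numof{\mu}+1$ to a basis of $D(\AAA',\mu')$. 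The only genuine obstacle is the third branch: recognizing that the sole obstruction to raising the divisibility order of $\theta_1$ is the scalar $f(\alpha_y,-\alpha_x)$, and that it can be cancelled by adding a multiple of $\theta_2$ exactly because the corresponding scalar $g(\alpha_y,-\alpha_x)$ is nonzero once Step 3 has failed. Everything else is bookkeeping with degrees and determinants.
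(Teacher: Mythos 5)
Your proof is correct and follows essentially the same route as the paper's: reduce membership in $D(\AAA',\mu')$ to the vanishing of the homogeneous polynomials $f$ and $g$ at $(\alpha_y,-\alpha_x)$, kill the obstruction for $\theta_1$ by adding $q\cdot\theta_2$ with $g(\alpha_y,-\alpha_x)\neq 0$, and conclude with K.~Saito's criterion after checking independence and the degree count. Your transition-matrix determinant argument for independence and your existence argument for $q$ (a scalar multiple of $x^d$ or $y^d$) are only minor refinements of the paper's explicit formula and case analysis.
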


\begin{proof}
Since $\theta_1$, $\theta_2 \in D(\AAA,\mu)$,
both
$f(x,y)=\frac{\theta_1(\alpha)}{\alpha^{m}}$ and
$g(x,y)=\frac{\theta_2(\alpha)}{\alpha^{m}}$
are polynomials.

First we consider the case  $g(\alpha_y,-\alpha_x)=0$.
Since $\theta_2$ is homogeneous, 
$g(\alpha_y,-\alpha_x)$ is also homogeneous.
Since $\kk$ is a field,
$g(\alpha_y,-\alpha_x)=0$
if and only if  $g(x,y)$ is divisible by $\alpha$. 
This implies $\theta_2(\alpha) \in \alpha^{m+1} S$.
Hence $\theta_2$ is in $D(\AAA',\mu')$.
Since $\theta_1 \in D(\AAA,m)$,  $\alpha\cdot \theta_1$ is also in 
$D(\AAA',\mu')$.
Since $\theta_1$ and $\theta_2$ are linearly independent,
 $\alpha\cdot \theta_1$ and $\theta_2$ are linearly independent. 
It is clear that
$\deg(\alpha\cdot \theta_1)+\deg(\theta_2)=\deg(\theta_1)+\deg(\theta_2)+1$.
Hence $(\alpha\cdot \theta_1,\theta_2)$ is a basis for $D(\AAA',\mu')$
by Theorem \ref{saitoscriterion}.

Next we consider the case
$g(\alpha_y,-\alpha_x)\neq 0$.
Let $d=\deg(\theta_1)-\deg(\theta_2)$.
It follows from $g(\alpha_y,-\alpha_x)\neq 0$ that the equation
\begin{gather*} 
f(\alpha_y,-\alpha_x)+g(\alpha_y,-\alpha_x) \sum_{i=0}^{d}q_i\alpha_y^{i}(-\alpha_x)^{d-i}=0
\end{gather*}
is solvable. For example, 
\begin{gather}
\begin{cases}
q_0=-\frac{f(\alpha_y,-\alpha_x)}{g(\alpha_y,-\alpha_x)(-\alpha_x)^{d}}
 -\sum_{i=1}^{d}\frac{\alpha_y^{i}}{(-\alpha_x)^{i}} & (i=0)\\
q_i=1  & (i>0)
\end{cases}
\label{defofq}
\end{gather}
is one of solutions if $\alpha_x\neq 0$.
For such $q_i$, let $q(x,y)=\sum_{i=0}^{d} q_i x^{i} y^{d-i}$ and
$\theta'_2=\theta_1+q(x,y)\cdot \theta_2$.
Since
$\theta'_1(\alpha)=\theta_1(\alpha)+q(x,y)\cdot \theta_2(\alpha)$,
\begin{align*}
\frac{\theta'_1(\alpha)}{\alpha^m}&=\frac{\theta_1(\alpha)}{\alpha^m}+\frac{q(x,y)\cdot \theta_2(\alpha)}{\alpha^m}\\
&=f(x,y)+q(x,y)g(x,y).
\end{align*}
Since $\frac{\theta'_1(\alpha)}{\alpha^m}$ is homogeneous, and
$f(\alpha_y,-\alpha_x)+q(\alpha_y,-\alpha_x)g(\alpha_y,-\alpha_x)=0$,
$\frac{\theta'_1(\alpha)}{\alpha^m}$ is divisible by $\alpha$.
Hence we have $\theta'_1 \in D(\AAA',\mu')$.
Since $\theta_2\in D(\AAA,\mu)$,
 $\alpha\cdot \theta_2 \in D(\AAA',\mu')$.
The linearly independence of
$\Set{\theta_1, \theta_2}$ implies 
the linearly independence of
$\Set{\theta'_2=\theta_1+q(x,y)\cdot \theta_2, \alpha\cdot \theta_2}$.
It is clear that
$\deg(\theta'_1)+\deg(\alpha\cdot\theta_2)=\deg(\theta_1)+\deg(\theta_2)+1$.
Hence $(\theta'_1,\alpha\cdot \theta_2)$ is a basis for $D(\AAA',\mu')$.
\end{proof}

By taking the polynomial defined by (\ref{defofq}) 
as $q(x,y)$ in Step (\ref{alg1:step6}) of the algorithm 
$\firstalgo$,
we have the following algorithm.

\begin{cor}\label{thm:algorithm2}
Let $(\AAA,\mu)$ be a multiarrangement. 
Fix $\alpha=\alpha_x x + \alpha_y y \in S_{1}\setminus\Set{0}$. 
Let $\AAA'$ be $\AAA \cup \ker(\alpha)$,
and $\mu'$ the map such that 
\begin{gather*}
\mu'(H)=
\begin{cases}
\mu(\ker(\alpha)) + 1 & (H=\ker(\alpha))\\
\mu(H) & ( H \in \AAA \setminus \Set{\ker(\alpha)}). 
\end{cases}
\end{gather*}

Then we can construct a homogeneous basis $(\theta'_1, \theta'_2)$ 
for $D(\AAA',\mu')$
from a homogeneous basis $(\theta_1,\theta_2)$ for $D(\AAA,\mu)$ by
the following algorithm $\secondalgo :$
\begin{description}
\item[Input] $\theta_1$, $\theta_2$, 
$\alpha=(\alpha_x x +\alpha_y y)$, $m=\mu(\ker(\alpha))$.
\item[Output] $\theta'_1$, $\theta'_2$.
\item[Procedure]
\ 

\begin{enumerate} 
\item If $\deg(\theta_1) < \deg(\theta_2)$, 
then swap $\theta_1$ and $\theta_2$. 
\item Let $g(x,y)=\frac{\theta_2(\alpha)}{\alpha^{m}}$.
\item If $g(\alpha_y,-\alpha_x)=0$, then
let $\theta'_1=\alpha\cdot \theta_1$ and $\theta'_2=\theta_2$,
and finish the procedure.
\item Let $f(x,y)=\frac{\theta_1(\alpha)}{\alpha^{m}}$. 
\item If $f(\alpha_y,-\alpha_x)=0$, then
let $\theta'_1=\theta_1$ and $\theta'_2=\alpha\cdot \theta_2$,
and finish the procedure.
\item Let $d=\deg(\theta_1)-\deg(\theta_2)$.
\item If $\alpha_x=0$, then
let $\theta'_1=\theta_1-\frac{f(1,0)}{g(1,0)}x^d \theta_2$ 
and $\theta'_2=y \cdot \theta_2$,
and finish the procedure.
\item Let $q(x,y)$ be 
\begin{gather*}
\left(-\frac{f(\alpha_y,-\alpha_x)}{g(\alpha_y,-\alpha_x)(-\alpha_x)^{d}}
 -\sum_{i=1}^{d}\frac{\alpha_y^{i}}{(-\alpha_x)^{i}} \right) y^d
+ \sum_{i=1}^{d} x^i y^{d-i}.
\end{gather*}
\item Let $\theta'_1=\theta_1+q(x,y)\cdot \theta_2$ and 
$\theta'_2=\alpha\cdot \theta_2$.
\end{enumerate}

\end{description}
\end{cor}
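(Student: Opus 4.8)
The plan is to recognize $\secondalgo$ as a concrete instance of $\firstalgo$, so that the whole statement reduces to Theorem \ref{thm:algorithm1} once I exhibit the polynomial $q(x,y)$ that $\secondalgo$ uses in each branch and check that it satisfies the hypothesis of Step (\ref{alg1:step6}) of $\firstalgo$. First I would observe that Steps 1--5 of $\secondalgo$ are verbatim copies of Steps 1--5 of $\firstalgo$; in particular the two early-termination branches, where $g(\alpha_y,-\alpha_x)=0$ or $f(\alpha_y,-\alpha_x)=0$, are handled identically, and their correctness is already established in the proof of Theorem \ref{thm:algorithm1}. Hence it suffices to treat the remaining case, in which the procedure passes Steps 3 and 5 without terminating, so that $g(\alpha_y,-\alpha_x)\neq 0$ and $f(\alpha_y,-\alpha_x)\neq 0$.

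In that case $\secondalgo$ outputs $\theta'_1=\theta_1+q(x,y)\cdot\theta_2$ and $\theta'_2=\alpha\cdot\theta_2$ exactly as in Step 7 of $\firstalgo$, with two differences I must justify: the explicit choice of $q$, and the replacement of $\alpha\cdot\theta_2$ by $y\cdot\theta_2$ when $\alpha_x=0$. For the generic branch $\alpha_x\neq 0$, I would note that the polynomial in Step 8 of $\secondalgo$ is precisely the expansion $q(x,y)=\sum_{i=0}^{d} q_i x^i y^{d-i}$ of the coefficients $q_i$ given by (\ref{defofq}); since the proof of Theorem \ref{thm:algorithm1} already verifies that these $q_i$ solve $f(\alpha_y,-\alpha_x)+g(\alpha_y,-\alpha_x)q(\alpha_y,-\alpha_x)=0$, this branch is literally an execution of $\firstalgo$, and Theorem \ref{thm:algorithm1} delivers the conclusion.

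The one genuinely new point is the branch $\alpha_x=0$, for which (\ref{defofq}) is undefined. Here $\alpha=\alpha_y y$ with $\alpha_y\neq 0$, and I would take $q(x,y)=-\frac{f(1,0)}{g(1,0)}x^d$, which is well defined because $g(\alpha_y,0)=\alpha_y^{\,d_2-m}g(1,0)\neq 0$ forces $g(1,0)\neq 0$ (here $d_i=\deg(\theta_i)$ and $m=\mu(\ker(\alpha))$). To check that this $q$ satisfies the equation of Step (\ref{alg1:step6}), I would evaluate at $(\alpha_y,-\alpha_x)=(\alpha_y,0)$: only the pure $x^d$ monomial of $q$ survives, and using the homogeneity of $f$ and $g$, of degrees $d_1-m$ and $d_2-m$, together with $d=d_1-d_2$, a one-line computation gives $g(\alpha_y,0)\,q(\alpha_y,0)=-f(\alpha_y,0)$, as required. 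Theorem \ref{thm:algorithm1} then shows that $\theta_1+q(x,y)\cdot\theta_2$ together with $\alpha\cdot\theta_2$ form a basis; and since $\alpha=\alpha_y y$ with $\alpha_y\neq 0$, replacing $\alpha\cdot\theta_2$ by the output $y\cdot\theta_2$ only rescales one basis vector by the nonzero scalar $\alpha_y$ and so preserves the basis property. I expect the only real obstacle to be confirming that the bare coefficient $-\frac{f(1,0)}{g(1,0)}$ in this last branch is exactly the specialization of the general solution, which is precisely the degree bookkeeping just described; everything else is routine and reduces the corollary to Theorem \ref{thm:algorithm1}.
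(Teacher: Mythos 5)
Your proposal is correct and follows essentially the same route as the paper, which derives $\secondalgo$ from Theorem \ref{thm:algorithm1} simply by substituting the explicit solution (\ref{defofq}) for $q(x,y)$ in Step (\ref{alg1:step6}). You in fact supply more detail than the paper does for the $\alpha_x=0$ branch (checking $g(1,0)\neq 0$, verifying $q=-\frac{f(1,0)}{g(1,0)}x^d$ solves the required equation, and noting $y\cdot\theta_2$ is a unit multiple of $\alpha\cdot\theta_2$), and those checks are accurate.
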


We have a basis $(\der_x,\der_y)$ for $D(\emptyset,0)$.
By applying the algorithm in Theorem \ref{thm:algorithm1} recursively,
we can construct a basis for $D(\AAA,\mu)$ for all multiarrangements.

\begin{theorem}\label{thm:algorithm3}
We can construct a homogeneous basis $(\theta_1, \theta_2)$ 
for a multiarrangement $D(\AAA,\mu)$ by
the following algorithm $\thirdalgo:$
\begin{description}
\item[Input] $(\AAA,\mu)$.
\item[Output] $(\theta_1,\theta_2)$.
\item[Procedure]
\ 
\begin{enumerate}
\item If $\AAA = \emptyset$, then let $\theta_1=\der_x$ and  $\theta_2=\der_y$,
and finish the procedure.
\item Let $H$ be a hyperplane in $\supp(\mu)$.
\item Let $\mu'$ be a map such that
\begin{gather*}
\mu'(H')=
\begin{cases}
\mu(H)-1 & (H'=H), \\
\mu(H') &(H' \in \AAA \setminus\Set{H}).
\end{cases}
\end{gather*}
\item Let $\AAA' = \supp(\mu')$.
\item Let $(\theta'_1, \theta'_2)$ be the resulting basis of
	   $\thirdalgo(\AAA',\mu')$.
\item Let $\alpha\in S_1$ be a linear form such that $\ker(\alpha)=H$.
\item Let $(\theta_1, \theta_2)$ be the resulting basis of
	   $\firstalgo(\theta'_1, \theta'_2,\alpha,\mu'(H))$.
\end{enumerate}
\end{description}
\end{theorem}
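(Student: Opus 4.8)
The plan is to argue by induction on $\numof{\mu}$, since each recursive call made by $\thirdalgo$ strictly decreases this quantity. The correctness will then follow almost entirely from Theorem \ref{thm:algorithm1}, which does the substantive work; $\thirdalgo$ is merely a recursive wrapper that peels off one unit of multiplicity at a time and then rebuilds the basis with $\firstalgo$.

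First I would dispose of the base case $\numof{\mu}=0$. Under the standing identification $\AAA=\supp(\mu)$, this is equivalent to $\AAA=\emptyset$ and $\mu=0$, so the procedure returns $(\der_x,\der_y)$. These two derivations are $S$-linearly independent and homogeneous of degree $0$, and $\deg(\der_x)+\deg(\der_y)=0=\numof{\mu}$, so Saito's criterion (Theorem \ref{saitoscriterion}) confirms that they form a homogeneous basis for $D(\emptyset,0)$.

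For the inductive step I would assume that $\thirdalgo$ returns a correct homogeneous basis whenever the input multiarrangement has total multiplicity strictly less than $\numof{\mu}$, and consider $(\AAA,\mu)$ with $\AAA\neq\emptyset$. Choosing $H\in\supp(\mu)$ and forming $\mu'$ by decrementing the multiplicity at $H$, one has $\numof{\mu'}=\numof{\mu}-1$, so by the inductive hypothesis the recursive call $\thirdalgo(\AAA',\mu')$ with $\AAA'=\supp(\mu')$ returns a homogeneous basis $(\theta'_1,\theta'_2)$ for $D(\AAA',\mu')$. I would then pick a linear form $\alpha$ with $\ker(\alpha)=H$, which is always possible since $H$ is a line through the origin in $V$, and invoke $\firstalgo(\theta'_1,\theta'_2,\alpha,\mu'(H))$. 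By Theorem \ref{thm:algorithm1}, the output is a homogeneous basis for $D(\AAA'\cup\{H\},\mu'')$, where $\mu''$ agrees with $\mu'$ except that $\mu''(H)=\mu'(H)+1$.

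It remains to check that this reconstructed multiarrangement is exactly $(\AAA,\mu)$, and this bookkeeping is the only point requiring care. Since $\mu''(H)=\mu'(H)+1=\mu(H)$ and $\mu''(H'')=\mu'(H'')=\mu(H'')$ for every other hyperplane, I have $\mu''=\mu$. For the arrangement I would distinguish the cases $\mu(H)=1$ and $\mu(H)>1$: in the former $H\notin\supp(\mu')$, so $\AAA'=\AAA\setminus\{H\}$, while in the latter $H\in\supp(\mu')$, so $\AAA'=\AAA$; in both cases $\AAA'\cup\{H\}=\supp(\mu)=\AAA$. Hence $\firstalgo$ returns a homogeneous basis for $D(\AAA,\mu)$, which closes the induction. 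I expect no genuine obstacle beyond this support bookkeeping, because the delicate linear-algebra and degree arguments that make the construction succeed are already encapsulated in Theorem \ref{thm:algorithm1}.
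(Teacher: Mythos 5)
Your proof is correct and follows the same route as the paper, which justifies $\thirdalgo$ simply by noting that $(\der_x,\der_y)$ is a basis for $D(\emptyset,0)$ and that Theorem \ref{thm:algorithm1} can be applied recursively. Your write-up merely makes explicit the induction on $\numof{\mu}$ and the support bookkeeping that the paper leaves implicit.
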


\section{Application}\label{applysec}
In this section, we show  applications of our algorithms.
First we consider the difference between the degrees of 
elements of homogeneous basis. 
Next we explicitly 
 describe a homogeneous basis for some arrangements 
in the case where $\kk$ is a finite field.
Finally we  briefly introduce an implementation of our algorithms.

\subsection{Difference between exponents}\label{applysec:dif}
In this subsection, we consider the difference between the degrees of 
elements of homogeneous basis, i.e., the difference between the
exponents  of a multiarrangement in two-dimensional vector space. 
When a multiarrangement is  made larger,
the difference of its exponents either increases or decreases. 
We show that
the difference decreases if a generic hyperplane is added to 
a multiarrangement with the two degrees different.

The next two corollaries
follow from Theorem \ref{thm:algorithm1}.
\begin{cor}
\label{cor:difI}
Let $(\AAA,\mu)$ be a multiarrangement,
and $\Set{\theta_1,\theta_2}$ a homogeneous basis for $D(\AAA,\mu)$
 such that $\deg(\theta_1) \geq \deg(\theta_2)$.

Fix $\alpha=\alpha_x x+\alpha_y y\in S_{1}\setminus\Set{0}$.
Let $\mu'$ be the map such that 
\begin{gather*}
\mu'(H)=
\begin{cases}
\mu(\ker(\alpha))+1 &(H=\ker(\alpha)) \\
\mu(H) & (H\in\AAA\setminus\Set{\ker(\alpha)}).
\end{cases}
\end{gather*}
Let $\Set{\theta'_1,\theta'_2}$ be a homogeneous basis for $D(\supp{\mu'},\mu')$.

Let  $d'=\numof{\deg(\theta'_1) - \deg(\theta'_2)}$, 
$d=\numof{\deg(\theta_1) - \deg(\theta_2)}$. Then 
\begin{gather*}
\begin{cases}
d'>d
& \text{($g(\alpha_y,-\alpha_x)=0$ or
$d=0$)} \\
d'<d
& \text{(otherwise)}, 
\end{cases}
\end{gather*}
where $g = \frac{\theta_2(\alpha)}{ \alpha^{\mu(\ker(\alpha))} }$.
\end{cor}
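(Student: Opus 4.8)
The plan is to reduce the statement to reading off how the two degrees change under a single application of $\firstalgo$, after first observing that $d'$ is intrinsic to $D(\supp(\mu'),\mu')$ and hence may be computed from any convenient homogeneous basis. Since $D(\supp(\mu'),\mu')$ is free of rank two, every homogeneous basis is a minimal homogeneous generating set, and the degrees of such a set are determined by the graded structure of the module (for instance, the Hilbert series of $\bigoplus_j S(-a_j)$ is $(\sum_j t^{a_j})/(1-t)^2$, from which the pair $\Set{a_1,a_2}$ is recoverable). Thus the multiset $\Set{\deg(\theta'_1),\deg(\theta'_2)}$, and in particular $d'=\numof{\deg(\theta'_1)-\deg(\theta'_2)}$, does not depend on the chosen basis, so I may take $(\theta'_1,\theta'_2)$ to be the output of $\firstalgo$ applied to $(\theta_1,\theta_2,\alpha,\mu(\ker(\alpha)))$, whose correctness is Theorem \ref{thm:algorithm1}.

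Next I would track the degrees through the branches of $\firstalgo$. Because $\deg(\theta_1)\ge\deg(\theta_2)$ by hypothesis, step (1) performs no swap; writing $d_1=\deg(\theta_1)$ and $d_2=\deg(\theta_2)$, we have $d=d_1-d_2\ge 0$. If $g(\alpha_y,-\alpha_x)=0$, the algorithm returns $(\alpha\cdot\theta_1,\theta_2)$, of degrees $d_1+1$ and $d_2$, so $d'=(d_1+1)-d_2=d+1$. If instead $g(\alpha_y,-\alpha_x)\neq 0$, then whether the algorithm exits at step (5) with output $(\theta_1,\alpha\cdot\theta_2)$ or at step (7) with output $(\theta_1+q(x,y)\cdot\theta_2,\alpha\cdot\theta_2)$, the first component has degree $d_1$: in the latter case $q$ is homogeneous of degree $d_1-d_2$, so $q\cdot\theta_2$ is homogeneous of degree $d_1$, and $\theta_1+q\cdot\theta_2$ is again homogeneous of degree $d_1$, being a sum of two homogeneous fields of that common degree and nonzero (as a basis element). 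In either subcase the output degrees are $d_1$ and $d_2+1$, whence $d'=\numof{d_1-(d_2+1)}=\numof{d-1}$.

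Finally I would assemble the dichotomy. In the branch $g(\alpha_y,-\alpha_x)=0$ one gets $d'=d+1>d$ for every $d$. In the branch $g(\alpha_y,-\alpha_x)\neq 0$ one has $d'=\numof{d-1}$, which equals $1>0=d$ when $d=0$ and equals $d-1<d$ when $d\ge 1$. Hence $d'>d$ precisely when $g(\alpha_y,-\alpha_x)=0$ or $d=0$, and $d'<d$ otherwise, as claimed.

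The only conceptual point needing care is the first one, the invariance of $d'$ under change of homogeneous basis; everything afterward is bookkeeping following the branches of $\firstalgo$. The single computational subtlety is confirming in the step-(7) branch that no degree drop occurs in $\theta_1+q\cdot\theta_2$, which is immediate once one notes that both summands are homogeneous of the common degree $d_1$ and that the sum cannot vanish by the $S$-linear independence of $\theta_1,\theta_2$.
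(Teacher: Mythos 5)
Your proof is correct and follows the same route as the paper, which simply asserts that the corollary ``directly follows from the algorithm $\firstalgo$''; you have filled in the two details the paper leaves implicit, namely the basis-independence of the exponent multiset and the degree bookkeeping through each branch of the algorithm. Both of those details check out (in particular your observation that $\theta_1+q\cdot\theta_2$ cannot drop degree, by $S$-linear independence, and that $d'=\numof{d-1}$ gives $d'>d$ exactly when $d=0$).
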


\begin{proof}
The corollary directly follows from the algorithm $\firstalgo$
in Theorem \ref{thm:algorithm1}. 
\end{proof}

\begin{cor}
\label{cor:difII}
Let $(\AAA,\mu)$ be a multiarrangement,
and $\Set{\theta_1,\theta_2}$ a homogeneous basis for $D(\AAA,\mu)$ such
 that $\deg(\theta_1) \geq \deg(\theta_2)$.
Let $\theta_2= \varphi(x,y)\der_x + \psi(x,y)\der_y$.
For $\alpha_x$, $\alpha_y$ satisfying 
\begin{gather*}
\alpha_x \varphi(\alpha_y, -\alpha_x) + \alpha_y \psi(\alpha_y,-\alpha_x) \neq 0,
\end{gather*}
let $\alpha = \alpha_x x + \alpha_y y$,
$\AAA'=\AAA\cup\Set{\ker(\alpha)}$,
and  $(\AAA', \mu')$  a multiarrangement such that 
\begin{gather*}
\mu'(H)=
\begin{cases}
1       &(H=\ker(\alpha))\\
\mu(H)  &(H\in\AAA\setminus\Set{\ker(\alpha)}).
\end{cases}
\end{gather*}

If $\deg(\theta_1) - \deg(\theta_2) = 1$,
then
$\deg(\theta_1') = \deg(\theta_2')$,
where $\Set{\theta'_1,\theta'_2}$ is a homogeneous basis for 
$D(\AAA',\mu')$.
\end{cor}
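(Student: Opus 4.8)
The plan is to read off the behaviour of $\firstalgo$ in the special case $m=\mu(\ker(\alpha))=0$ and $d=\deg(\theta_1)-\deg(\theta_2)=1$, and to show that the hypothesis on $\alpha_x,\alpha_y$ is exactly the condition $g(\alpha_y,-\alpha_x)\neq 0$ that forces the algorithm into its final branch. First I would compute $g(x,y)=\theta_2(\alpha)/\alpha^{m}$. Since $m=0$ here (because $\mu'(\ker(\alpha))=1$ means $\ker(\alpha)$ was previously absent, so $\mu(\ker(\alpha))=0$), we have $g(x,y)=\theta_2(\alpha)=\varphi(x,y)\alpha_x+\psi(x,y)\alpha_y$. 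Evaluating at $(x,y)=(\alpha_y,-\alpha_x)$ gives
\begin{gather*}
g(\alpha_y,-\alpha_x)=\alpha_x\varphi(\alpha_y,-\alpha_x)+\alpha_y\psi(\alpha_y,-\alpha_x),
\end{gather*}
which is precisely the quantity assumed nonzero in the statement. So the hypothesis guarantees $g(\alpha_y,-\alpha_x)\neq 0$.

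**Running the algorithm.**
With $g(\alpha_y,-\alpha_x)\neq 0$, Step 3 of $\firstalgo$ does not trigger. Next I would invoke Corollary \ref{cor:difI}: since $g(\alpha_y,-\alpha_x)\neq 0$ and $d=1\neq 0$, we are in the ``otherwise'' case, so $d'<d=1$, forcing $d'=0$, i.e.\ $\deg(\theta'_1)=\deg(\theta'_2)$. Alternatively, one can argue directly through Steps 4--7: the algorithm proceeds to the final branch, sets $\theta'_2=\alpha\cdot\theta_2$ and $\theta'_1=\theta_1+q(x,y)\theta_2$ with $q$ homogeneous of degree $d=1$, so $\deg(\theta'_1)=\deg(\theta_1)=\deg(\theta_2)+1=\deg(\alpha\cdot\theta_2)=\deg(\theta'_2)$, giving the equality of degrees. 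One should just note that in the excluded intermediate case ($f(\alpha_y,-\alpha_x)=0$, Step 5) the output would be $(\theta_1,\alpha\cdot\theta_2)$ with degree difference $|d-1|=0$ as well, so the conclusion holds regardless of which of the two nontrivial branches is taken.

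**The main obstacle.**
The only genuine subtlety is bookkeeping on the exponent of $\alpha$: I must confirm that $m=0$ so that $g=\theta_2(\alpha)$ matches the expression in the hypothesis verbatim, rather than carrying an extra factor of $\alpha^{-m}$. This follows because $\mu'$ assigns $\ker(\alpha)$ the value $1$, which by the construction $\mu'(\ker(\alpha))=\mu(\ker(\alpha))+1$ implicit in the ``adding one'' framework means $\mu(\ker(\alpha))=0$, i.e.\ $\ker(\alpha)\notin\AAA$. Everything else is a direct application of Theorem \ref{thm:algorithm1} and Corollary \ref{cor:difI}, so once the branch selection is justified the degree computation is immediate.
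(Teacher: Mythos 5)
Your proposal is essentially the paper's own proof: identify the hypothesis on $\alpha_x,\alpha_y$ with the condition $g(\alpha_y,-\alpha_x)\neq 0$ in $\firstalgo$ (with $m=0$), then conclude via the strict decrease of the degree difference that $d'<d=1$, hence $d'=0$. The branch analysis and the degree bookkeeping are correct.

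The one step you justify differently, and not quite validly, is $\mu(\ker(\alpha))=0$. You infer it from an ``adding one'' framework supposedly implicit in the definition of $\mu'$, but the statement only says $\mu'(\ker(\alpha))=1$; it nowhere asserts $\mu'(\ker(\alpha))=\mu(\ker(\alpha))+1$, so nothing in the definition of $\mu'$ rules out $\mu(\ker(\alpha))\geq 1$ a priori. The paper instead derives $\mu(\ker(\alpha))=0$ from the hypothesis itself: since $\theta_2(\alpha)\vert_{x=\alpha_y,\,y=-\alpha_x}=\alpha_x\varphi(\alpha_y,-\alpha_x)+\alpha_y\psi(\alpha_y,-\alpha_x)\neq 0$, the homogeneous polynomial $\theta_2(\alpha)$ is not divisible by $\alpha$; but $\theta_2\in D(\AAA,\mu)$ forces $\theta_2(\alpha)\in\left(\alpha^{\mu(\ker(\alpha))}\right)$, which is only possible if $\mu(\ker(\alpha))=0$. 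Replacing your appeal to the framework by this one-line divisibility argument makes the proof complete and identical in substance to the paper's.
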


\begin{proof}
Since 
\begin{gather*}
\theta_2(\alpha_x x+ \alpha_y y) |_{x=\alpha_y, y=-\alpha_x} =
\alpha_x \varphi(\alpha_y, -\alpha_x) + \alpha_y \psi(\alpha_y,-\alpha_x) \neq 0,
\end{gather*}
$\theta_2(\alpha)$ does not divisible by $\alpha=\alpha_x x+ \alpha_y y$.
Since $\theta_2\in D(\AAA,\mu)$, $\mu(\ker(\alpha))=0$.
Apply $\firstalgo$. Since 
\begin{gather*}
g(\alpha_x, -\alpha_y)=
\left.\frac{\theta_2(\alpha_x x+ \alpha_y y)}{\alpha^0}
 \right|_{x=\alpha_y, y=-\alpha_x} \neq 0, 
\end{gather*}
\begin{gather*}
|\deg(\theta_1') - \deg(\theta_2')| < |\deg(\theta_1) - \deg(\theta_2)|
\end{gather*}
if
$\deg(\theta_1) > \deg(\theta_2)$,
where $\Set{\theta'_1,\theta'_2}$ is 
the homogeneous basis for $D(\AAA',\mu')$
obtained from $(\theta_1, \theta_2, \alpha, 0)$ 
by the algorithm $\firstalgo$.
Hence we have the corollary.
\end{proof}

Corollary \ref{cor:difI} implies the following corollary.
\begin{cor}
Fix $H=\ker(\alpha)\in\AAA$. Let $\mu$ satisfy 
\begin{gather*}
2 \mu(H) > \numof{\mu} .
\end{gather*}
If $\Set{\theta_1,\theta_2}$ is a basis for $D(\AAA,\mu)$ satisfying 
$\deg(\theta_1)>\deg(\theta_2)$, then
$\Set{\alpha^{n}\theta_1,\theta_2}$ is a basis for
 $D(\AAA,\mu'')$, 
where $\mu''$ is the map such that 
\begin{gather*}
\mu''(H')=
\begin{cases}
\mu(H)+n & (H'=H) \\
\mu(H') &(H'\in\AAA\setminus\Set{H}).
\end{cases}
\end{gather*}
\end{cor}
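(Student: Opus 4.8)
The plan is to prove the statement by induction on $n$, reducing it to $n$ successive applications of the algorithm $\firstalgo$ of Theorem \ref{thm:algorithm1} with the fixed linear form $\alpha$. The base case $n=0$ is the hypothesis itself. For the inductive step it suffices to show that whenever $\Set{\psi_1,\psi_2}$ is a homogeneous basis for $D(\AAA,\nu)$ with $\deg(\psi_1)>\deg(\psi_2)$ and $2\nu(H)>\numof{\nu}$, running $\firstalgo$ on $(\psi_1,\psi_2,\alpha,\nu(H))$ returns exactly $(\alpha\cdot\psi_1,\psi_2)$, and that the incremented multiarrangement again satisfies the same two inequalities.

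The key step is to verify that $\firstalgo$ terminates already at Step (3), i.e.\ that $g(\alpha_y,-\alpha_x)=0$ where $g=\psi_2(\alpha)/\alpha^{\nu(H)}$, and I would argue this by a degree count. By Saito's criterion (Theorem \ref{saitoscriterion}) we have $\deg(\psi_1)+\deg(\psi_2)=\numof{\nu}$; combined with $\deg(\psi_1)>\deg(\psi_2)$ this gives $2\deg(\psi_2)<\numof{\nu}$, and the hypothesis $2\nu(H)>\numof{\nu}$ then forces $\deg(\psi_2)<\nu(H)$. Now $\psi_2(\alpha)$ is homogeneous of degree $\deg(\psi_2)$, while $\psi_2\in D(\AAA,\nu)$ forces $\alpha^{\nu(H)}$ to divide $\psi_2(\alpha)$; since any nonzero multiple of $\alpha^{\nu(H)}$ has degree at least $\nu(H)>\deg(\psi_2)$, we conclude $\psi_2(\alpha)=0$. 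Hence $g=0$, so in particular $g(\alpha_y,-\alpha_x)=0$, Step (3) fires, and $\firstalgo$ outputs $(\alpha\cdot\psi_1,\psi_2)$; this is precisely the branch of Corollary \ref{cor:difI} in which the difference of exponents strictly increases.

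Finally I would check that the inductive hypotheses are preserved. After one step the multiplicity at $H$ becomes $\nu(H)+1$ and the total becomes $\numof{\nu}+1$, and $2(\nu(H)+1)=2\nu(H)+2>\numof{\nu}+1$ shows the analogous inequality persists; moreover $\deg(\alpha\cdot\psi_1)=\deg(\psi_1)+1>\deg(\psi_2)$ is retained, and the arrangement is unchanged since $H=\ker(\alpha)\in\AAA$. Iterating $n$ times then yields $(\alpha^n\theta_1,\theta_2)$ as a basis for $D(\AAA,\mu'')$. I do not expect a serious obstacle: the whole argument hinges on the single observation that the multiplicity condition $2\nu(H)>\numof{\nu}$ forces $\psi_2(\alpha)=0$, which is exactly the degenerate branch of $\firstalgo$. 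As an alternative one could bypass the induction entirely and verify the three conditions of Saito's criterion for $\Set{\alpha^n\theta_1,\theta_2}$ directly, using $\theta_2(\alpha)=0$ to see that both vectors lie in $D(\AAA,\mu'')$ and the degree identity $\deg(\alpha^n\theta_1)+\deg(\theta_2)=\numof{\mu}+n=\numof{\mu''}$ to conclude they form a basis.
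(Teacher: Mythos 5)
Your proof is correct, but it takes a genuinely different route from the paper's. The paper reduces to $n=1$ and then invokes the known classification of exponents for multiarrangements with a dominant hyperplane (if $2\kappa(H)\geq\numof{\kappa}$ then the exponents are $(\kappa(H),\numof{\kappa}-\kappa(H))$, citing Orlik--Terao): from this it computes that the difference of exponents must increase by one, and then uses Corollary \ref{cor:difI} in the contrapositive to conclude that $g(\alpha_y,-\alpha_x)=0$, so that $\firstalgo$ terminates at Step (3). You instead prove the vanishing directly and self-containedly: Saito's criterion gives $\deg(\theta_1)+\deg(\theta_2)=\numof{\mu}$, whence $2\deg(\theta_2)<\numof{\mu}<2\mu(H)$ and $\deg(\theta_2)<\mu(H)$; since $\theta_2(\alpha)$ is homogeneous of degree $\deg(\theta_2)$ and divisible by $\alpha^{\mu(H)}$, it must be identically zero. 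This is a stronger conclusion than the paper extracts ($g\equiv 0$ rather than merely $g(\alpha_y,-\alpha_x)=0$), it avoids the external citation and the detour through Corollary \ref{cor:difI}, and it makes the preservation of the hypotheses under iteration explicit (the paper leaves the induction on $n$ implicit in ``it is enough to show the case $n=1$''). Your closing alternative --- verifying Saito's criterion directly for $\{\alpha^n\theta_1,\theta_2\}$ using $\theta_2(\alpha)=0$ and the degree identity $\deg(\alpha^n\theta_1)+\deg(\theta_2)=\numof{\mu}+n=\numof{\mu''}$ --- is also valid and arguably the cleanest formulation, since it bypasses the algorithm entirely. In short, what your approach buys is self-containedness and a sharper intermediate fact; what the paper's approach buys is brevity by leaning on known results about exponents.
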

\begin{proof}
It is enough to show the case where $n=1$.
In this case,  $2 \mu'(H) > \numof{\mu'}$, $2 \mu(H) > \numof{\mu}$, 
and $\mu'(H) = \mu(H)+1$.
For a multiarrangement $(\AAA, \kappa)$ such that 
 $2 \kappa(H) \geq \numof{\kappa}$ for some $H\in \AAA$,  
it is known that the exponents of $(\AAA,\kappa)$
are $(\kappa(H), \numof{\kappa}-\kappa(H))$.
(See \cite{ot}.)
Since $2 \mu(H) > \numof{\mu}$ and $2 \mu'(H) > \numof{\mu'}$,
we have 
\begin{align*}
\deg(\theta_1)&=\mu(H),\\
\deg(\theta_2)&=\numof{\mu}-\mu(H),\\
\deg(\theta'_1)&=\mu'(H) =\mu(H)+1,\\
\deg(\theta'_2)&=\numof{\mu'}-\mu'(H)=\numof{\mu}-\mu(H).
\end{align*}
Hence  
$\deg(\theta'_1)-\deg(\theta'_2)
=2\mu(H)-\numof{\mu} +1
=\deg(\theta_1)-\deg(\theta_2)+1.$
Since the difference increases,  
$g(\alpha_y,-\alpha_x)=0$ by Corollary \ref{cor:difI}.
Since $g(\alpha_y,-\alpha_x)=0$,
$\Set{\alpha\theta_1,\theta_2}$ is a basis for
 $D(\AAA,\mu'')$ by Theorem \ref{thm:algorithm1}.
Hence we have the corollary.
\end{proof}

\subsection{Finite Fields}\label{applysec:finite}
In this subsection, let $p$ be a prime number.
Let us take the finite field $F_q$  consisting of $q=p^n$ elements
as $\kk$.
Let $\AAA$ be the set of hyperplanes in $F_q^2$.
We explicitly describe homogeneous bases for some multiarrangements
 $D(\AAA,\mu)$.

Let $\theta_{q^i}$ be $x^{q^i} \der_x + y^{q^i} \der_y$,
and $\kappa_{q^i}$  the map such that $\kappa_{q^i}(H)=q^i$ for all hyperplanes $H$.

\begin{lemma}\label{finite:specialcase}
The set 
$\Set{\theta_{q^{i}},\theta_{q^{i+1}}}$
is 
a basis for $D(\AAA,\kappa_{p^i})$.
\end{lemma}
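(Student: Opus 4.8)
The plan is to verify the hypotheses of K.~Saito's criterion (Theorem~\ref{saitoscriterion}) for the pair $(\theta_{q^i},\theta_{q^{i+1}})$: namely that both vector fields lie in $D(\AAA,\kappa_{q^i})$, that they are $S$-linearly independent, and that the sum of their degrees equals $\numof{\kappa_{q^i}}$. The degree bookkeeping is the easiest part. Over $F_q^2$ the hyperplanes through the origin are the $q+1$ one-dimensional subspaces, so $\numof{\kappa_{q^i}}=(q+1)q^i=q^i+q^{i+1}$; on the other hand $\theta_{q^i}$ and $\theta_{q^{i+1}}$ are homogeneous of degrees $q^i$ and $q^{i+1}$ respectively. (In particular the common weight must be $q^i$, so the relevant map is $\kappa_{q^i}$, which is what balances Saito's count.)

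The substantive step is a single Frobenius computation. For an arbitrary linear form $\alpha=\alpha_x x+\alpha_y y\in S_1\setminus\Set{0}$ and any integer $j\geq 0$, applying the derivation gives
\begin{gather*}
\theta_{q^j}(\alpha)=x^{q^j}\der_x(\alpha)+y^{q^j}\der_y(\alpha)=\alpha_x\,x^{q^j}+\alpha_y\,y^{q^j}.
\end{gather*}
Since $q^j$ is a power of the characteristic $p$, the $q^j$-th power map is additive on $S$, and since $\alpha_x,\alpha_y\in F_q$ we have $\alpha_x^{q^j}=\alpha_x$ and $\alpha_y^{q^j}=\alpha_y$; hence
\begin{gather*}
\alpha^{q^j}=(\alpha_x x+\alpha_y y)^{q^j}=\alpha_x^{q^j}x^{q^j}+\alpha_y^{q^j}y^{q^j}=\alpha_x\,x^{q^j}+\alpha_y\,y^{q^j}=\theta_{q^j}(\alpha).
\end{gather*}
Specializing to $j=i$ and $j=i+1$ shows $\theta_{q^i}(\alpha)=\alpha^{q^i}$ and $\theta_{q^{i+1}}(\alpha)=\alpha^{q^{i+1}}$, each of which is divisible by $\alpha^{q^i}=\alpha^{\kappa_{q^i}(\ker(\alpha))}$. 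As $\alpha$ was arbitrary, both $\theta_{q^i}$ and $\theta_{q^{i+1}}$ belong to $D(\AAA,\kappa_{q^i})$.

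Finally, for $S$-linear independence I would examine the coefficient matrix of the two fields,
\begin{gather*}
\begin{pmatrix} x^{q^i} & y^{q^i}\\ x^{q^{i+1}} & y^{q^{i+1}}\end{pmatrix},
\end{gather*}
whose determinant $x^{q^i}y^{q^{i+1}}-x^{q^{i+1}}y^{q^i}$ is a nonzero element of the domain $S$, because the two monomials are distinct (as $q^i\neq q^{i+1}$). A nonvanishing determinant forces any relation $f\theta_{q^i}+g\theta_{q^{i+1}}=0$ with $f,g\in S$ to have $f=g=0$. Together with the degree identity, this lets Saito's criterion conclude that $\Set{\theta_{q^i},\theta_{q^{i+1}}}$ is a basis for $D(\AAA,\kappa_{q^i})$. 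I do not anticipate a genuine obstacle: the only point carrying real content is the identity $\theta_{q^j}(\alpha)=\alpha^{q^j}$, which rests on the Frobenius relations over $F_q$, while membership, independence, and the degree count are all routine consequences.
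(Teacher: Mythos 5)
Your proposal is correct and follows essentially the same route as the paper: membership via the Frobenius identity $(\alpha_x x+\alpha_y y)^{q^j}=\alpha_x x^{q^j}+\alpha_y y^{q^j}$, linear independence via the nonvanishing determinant $x^{q^i}y^{q^{i+1}}-x^{q^{i+1}}y^{q^i}$, and the count $\numof{\kappa_{q^i}}=(q+1)q^i=q^i+q^{i+1}$ feeding into Saito's criterion. You merely spell out the Frobenius/Fermat details that the paper states in one line, and you correctly read $\kappa_{p^i}$ in the statement as the typo for $\kappa_{q^i}$ that it is.
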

\begin{proof}
Since $(a x + b y)^{q}= a x^q + b y^q$,
$\theta_{q^i}$ and $\theta_{q^{i+1}} $
are in $D(\AAA,\kappa_{q^i})$.
Since $x^{q^i} y^{q^{i+1}} - x^{q^{i+1}} y^{q^i} \neq 0$, 
$\Set{\theta_{q^i},\theta_{q^{i+1}}}$
is linearly independent.
Since the number $\numof{\AAA}$ of hyperplanes in $F_q^2$ is
 $ \frac{\numof{F_q^2\setminus{(0,0)}}}{F_q^{\times}} =\frac{q^2-1}{q-1}=q+1$, 
 $|\kappa_{q^i}| = q^{i}(q+1) = \deg(\theta_{q^{i}} )+\deg(\theta_{q^{i+1}} )$.
Hence
the set 
$\Set{\theta_{q^{i}},\theta_{q^{i+1}}}$
is 
a basis for $D(\AAA,\kappa_{q^i})$.
\end{proof}

\begin{cor}\label{finite:main}
Let $i$ be an positive integer.
For each $H \in \AAA$, fix $\alpha_H(x,y) \in S_{1}$ 
satisfying $\ker(\alpha_H(x,y))=H$,
and let $j_H$ be an integer such that $0 \leq j_H \leq q^{i+1}-q^i$.
Let $\alpha = \prod_{H} \alpha_H^{j_H}$. 
Let $\mu$ be the map from $\AAA$ to $\ZZ_{>0}$ such that $\mu(H)=q^i+j_H$ for
 each hyperplane $H$.

Then
\begin{gather*}
\Set{\alpha(x,y)\cdot \theta_{q^{i}}, \theta_{q^{i+1}}}
\end{gather*}
is a basis for $D(\AAA,\mu)$.
\end{cor}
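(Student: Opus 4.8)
The plan is to verify directly that the pair $(\alpha\cdot\theta_{q^{i}},\theta_{q^{i+1}})$ satisfies the hypotheses of K.\ Saito's criterion (Theorem \ref{saitoscriterion}), rather than tracking the recursion of $\firstalgo$; this keeps the proof self-contained and parallels the proof of Lemma \ref{finite:specialcase}, which is exactly the case $j_H=0$ for all $H$. Concretely, I must establish three things: that both $\alpha\cdot\theta_{q^{i}}$ and $\theta_{q^{i+1}}$ lie in $D(\AAA,\mu)$, that they are $S$-linearly independent, and that the sum of their degrees equals $\numof{\mu}$.

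The computational core is the Frobenius identity already exploited in Lemma \ref{finite:specialcase}. For any $\beta=\beta_x x+\beta_y y\in S_{1}\setminus\{0\}$, the coefficients lie in $F_q$, so using $(\beta_x x+\beta_y y)^{q^{i}}=\beta_x x^{q^{i}}+\beta_y y^{q^{i}}$ we get
\[
\theta_{q^{i}}(\beta)=\beta_x x^{q^{i}}+\beta_y y^{q^{i}}=\beta^{q^{i}},
\qquad
\theta_{q^{i+1}}(\beta)=\beta^{q^{i+1}}.
\]
Write $H=\ker(\beta)$, so $\mu(H)=q^{i}+j_H$. For $\theta_{q^{i+1}}$ the membership condition $\beta^{q^{i+1}}\in(\beta^{\mu(H)})$ holds because $\mu(H)=q^{i}+j_H\le q^{i+1}$, which is precisely the bound $j_H\le q^{i+1}-q^{i}$. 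For $\alpha\cdot\theta_{q^{i}}$ we have $(\alpha\cdot\theta_{q^{i}})(\beta)=\alpha\cdot\beta^{q^{i}}$, and this lies in $(\beta^{\mu(H)})=(\beta^{q^{i}+j_H})$ exactly when $\beta^{j_H}$ divides $\alpha$. Since $\beta$ is a nonzero scalar multiple of $\alpha_H$ and $\alpha_H^{j_H}$ is one of the factors of $\alpha=\prod_{H'}\alpha_{H'}^{j_{H'}}$, this divisibility holds. Hence both fields belong to $D(\AAA,\mu)$.

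Linear independence is inherited from Lemma \ref{finite:specialcase}: the independence of $\theta_{q^{i}}$ and $\theta_{q^{i+1}}$ is witnessed by the nonvanishing of $x^{q^{i}}y^{q^{i+1}}-x^{q^{i+1}}y^{q^{i}}$, and multiplying the first field by the nonzero polynomial $\alpha$ merely scales this determinant by $\alpha$, leaving it nonzero. For the degree count, recall $\numof{\AAA}=q+1$ and $\deg(\alpha)=\sum_H j_H$, so
\[
\deg(\alpha\cdot\theta_{q^{i}})+\deg(\theta_{q^{i+1}})
=\Bigl(\textstyle\sum_{H} j_H+q^{i}\Bigr)+q^{i+1}
=\sum_{H}(q^{i}+j_H)=\numof{\mu},
\]
using $(q+1)q^{i}=q^{i}+q^{i+1}$. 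Saito's criterion then yields that $\{\alpha\cdot\theta_{q^{i}},\theta_{q^{i+1}}\}$ is a basis for $D(\AAA,\mu)$.

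The only step needing care is the divisibility check for $\alpha\cdot\theta_{q^{i}}$: one must correctly match the hyperplane $H=\ker(\beta)$ arising in the arrangement condition with the corresponding exponent $j_H$ inside the product defining $\alpha$, noting that distinct hyperplanes give non-proportional (hence coprime) linear forms, so the factor $\alpha_H^{j_H}$ is genuinely present and no larger power of $\beta$ is forced. Everything else is degree bookkeeping, and the membership of $\theta_{q^{i+1}}$ rests solely on the single inequality $j_H\le q^{i+1}-q^{i}$.
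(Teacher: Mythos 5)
Your proof is correct, but it takes a genuinely different route from the paper. The paper proves this corollary by running its own machinery: it fixes a saturated chain of multiarrangements from $(\AAA,\kappa_{q^i})$ up to $(\AAA,\mu)$ and argues inductively that each application of $\secondalgo$ (adding one copy of one hyperplane at a time) carries the basis $\{\alpha_{\mu_k}\theta_{q^i},\theta_{q^{i+1}}\}$ to $\{\alpha_{\mu_{k+1}}\theta_{q^i},\theta_{q^{i+1}}\}$, the key observation being that $\theta_{q^{i+1}}$ already lies in every intermediate module so the algorithm always lands in the branch that multiplies the other generator by the new linear form. You instead bypass the algorithm entirely and verify K.~Saito's criterion (Theorem \ref{saitoscriterion}) directly: the Frobenius identity $\theta_{q^k}(\beta)=\beta^{q^k}$ gives membership of $\theta_{q^{i+1}}$ via the single inequality $q^i+j_H\le q^{i+1}$ and membership of $\alpha\cdot\theta_{q^i}$ via the divisibility $\beta^{j_H}\mid\alpha$; independence and the degree count $\sum_H(q^i+j_H)=\numof{\mu}$ are immediate. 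Your argument is more self-contained and arguably cleaner --- it does not depend on the correctness of $\firstalgo$/$\secondalgo$ or on tracking the chain --- and it directly generalizes the proof of Lemma \ref{finite:specialcase}. What the paper's route buys is a demonstration of the algorithm in action, which is the point of the section, and it shows \emph{how} such bases arise from the recursive construction rather than merely certifying them after the fact. Both proofs are complete; your divisibility check (matching $H=\ker(\beta)$ with the factor $\alpha_H^{j_H}$ of $\alpha$, using that $\beta$ is a scalar multiple of $\alpha_H$) is exactly the point that needed care, and you handled it.
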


\begin{proof}
Fix a saturated chain 
\begin{gather*}
((\AAA,\mu_{0})=(\AAA,\kappa_{q^i}),(\AAA,\mu_{1}),\ldots,(\AAA,\mu_{n})=(\AAA,\mu))
\end{gather*}
of multiarrangements. 
For $0\leq k \leq n$,
let $\alpha_{\mu_k}=\prod_{H\in\AAA} \alpha_H^{\mu_{k}(H)-\mu_{0}(H)}$,
and $\theta'_{k}=\alpha_{\mu_k} \theta_{q^{i}}$.
For $0\leq k < n$,
let $\alpha_{k}=\frac{\alpha_{\mu_{k+1}}}{\alpha_{\mu_{k}}}$,
and $m_k=\mu_{k}(\ker(\alpha_{k}))$.
Since $\theta_{q^{i+1}}$ is in $D(\AAA,\mu_{k+1})$,
the resulting basis for $D(\AAA,\mu_{k+1})$ of the algorithm
$\secondalgo(\theta'_{k},\theta_{q^{i+1}},\alpha_{k},m_{k})$
is $\Set{\theta'_{k+1}, \theta_{q^{i+1}}}$
if $\Set{\theta'_{k},\theta_{p^{i+1}}}$ is 
a basis for $D(\AAA,\mu_{k})$.
Since the set $\Set{\theta_{q^{i}},\theta_{q^{i+1}}}$ is a basis for 
 $D(\AAA,\kappa_{q^i})=D(\AAA,\mu_{0})$,
we have the corollary.
\end{proof}

\subsection{Implementation}\label{applysec:implementation}
A simple implementation of our algorithms as 
a program for the computer algebra system \risaasir\cite{risaasir} 
is available in \cite{risaasirlibnu}.
The implementation is useful to compute exponents for many examples.
For example,
the following  can be observed with the implementation.
\begin{proposition}
Let 
$H_1={\Set{x+y=0}}$,
$H_2={\Set{x-y=0}}$,
$H_3={\Set{x=0}}$,
$H_4={\Set{y=0}}$,
and
$\AAA=\Set{H_i}$.
Let us assume that 
\begin{gather*}
\mu(H_{i})<\frac{\numof{\mu}}{2}
\end{gather*}
 for all $i$.
Then, for $20\leq \mu(H_{i}) \leq 30$,
$d=2$ if and only if
there exist $k,h,l\in \ZZ$ such that
\begin{align*}
 (\mu(H_1)&=2k+3+4h,  &\mu(H_2)&=2k+1,  & \mu(H_3)=\mu(H_4)&=2l), \\
 (\mu(H_3)&=2k+3+4h,  &\mu(H_4)&=2k+1,  & \mu(H_1)=\mu(H_2)&=2l), \\
 (\mu(H_1)&=2k+1+4h,  &\mu(H_2)&=2k+1,  & \mu(H_3)=\mu(H_4)&=2l+1),
\intertext{or}
 (\mu(H_3)&=2k+1+4h,  &\mu(H_4)&=2k+1,  & \mu(H_1)=\mu(H_2)&=2l+1) .
\end{align*}

\end{proposition}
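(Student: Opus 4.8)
The plan is to treat this as a finite verification whose shape is dictated by two facts: first, $\AAA$ is the reflection arrangement of the dihedral group of order $8$, i.e.\ the Coxeter arrangement of type $B_2$, so the four lines $x=0$, $y=0$, $x+y=0$, $x-y=0$ sit in harmonic position; second, the invertible substitution $(x,y)\mapsto(x+y,\,x-y)$ carries $\AAA$ to itself and induces the involution $H_1\leftrightarrow H_3$, $H_2\leftrightarrow H_4$, thereby interchanging the pair $\Set{H_1,H_2}$ with the pair $\Set{H_3,H_4}$. Since $D(\AAA,\mu)$ is natural under linear coordinate changes and such changes preserve degrees, this symmetry identifies the exponents of $\mu$ with those of the multiarrangement obtained by swapping the two pairs of multiplicities. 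One checks directly that this swap sends family (1) to family (2) and family (3) to family (4), so it suffices to analyze the two families in which $\Set{H_3,H_4}$ carries the equal multiplicities, and the remaining two follow by symmetry.

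Next I would reduce the entire question to a single integer. By Theorem \ref{saitoscriterion} any homogeneous basis satisfies $\deg(\theta_1)+\deg(\theta_2)=\numof{\mu}$, so the exponents are determined by $\numof{\mu}$ together with $d=\numof{\deg(\theta_1)-\deg(\theta_2)}$; hence the proposition is purely a statement about the value of $d$. I would then read off from Theorem \ref{thm:algorithm1} the sharp effect of raising one multiplicity by one: writing $g=\theta_2(\alpha)/\alpha^{\mu(\ker(\alpha))}$ for the current lower-degree field, Corollary \ref{cor:difI} combined with the degree bookkeeping inside $\firstalgo$ gives
\begin{gather*}
d'=\begin{cases} d+1 & (g(\alpha_y,-\alpha_x)=0 \text{ or } d=0),\\ d-1 & (\text{otherwise}). \end{cases}
\end{gather*}
Thus, building $\mu$ one hyperplane at a time via $\thirdalgo$, the quantity $d$ performs a walk with unit steps, constrained by $d\ge 0$ and $d\equiv\numof{\mu}\pmod 2$; in the balanced regime imposed by $\mu(H_i)<\numof{\mu}/2$ (which keeps us away from the unbalanced case of the final corollary of Subsection \ref{applysec:dif}, where the exponents would instead be $(\mu(H),\numof{\mu}-\mu(H))$ and $d$ would be large), the walk hovers among the small even values, and the target $d=2$ is governed entirely by the sequence of step signs.

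The main obstacle is precisely determining those signs, that is, deciding for each increment whether $g(\alpha_y,-\alpha_x)=0$. This requires controlling the lower-degree basis element $\theta_2$ finely enough to detect vanishing of $\theta_2(\alpha)/\alpha^{\mu(\ker(\alpha))}$ along the line being added, and the outcome is genuinely configuration-dependent: it is the harmonic position of the four lines that forces the clean parity and mod-$4$ dichotomy separating the listed families, whereas a generic fourth slope would destroy it. I would emphasise here that the characterisation does not reduce to residues alone: each family demands an \emph{exact} coincidence, $\mu(H_3)=\mu(H_4)$ or $\mu(H_1)=\mu(H_2)$, coupled to the residues of the remaining (odd) pair modulo $4$. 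These exact equalities are the delicate coincidences at which the walk is pushed from $d=0$ up to $d=2$, and pinning them down is where the fine structure of $\theta_2$ under the harmonic geometry must be used.

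Finally, because the claim is confined to the finite box $20\le\mu(H_i)\le 30$, the biconditional is a finite check: for every $\mu$ in this range satisfying the balance hypothesis, one runs $\thirdalgo$ to produce a homogeneous basis, computes $d$, and compares with the four arithmetic patterns. The structural scaffold above — reduction to $d$, the $\pm 1$ increment rule, and the pair-swapping symmetry — both explains the form of the answer and cuts the verification roughly in half, while a secondary subtlety, namely that the base field $\kk=F_q$ is finite and could in principle produce accidental vanishing of $g$ beyond the expected pattern, must be excluded on the stated range. This exclusion and the remaining case analysis are exactly what the \risaasir{} implementation of $\thirdalgo$ carries out, so the proposition follows by assembling the symmetry reduction with that finite computation.
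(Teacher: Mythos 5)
Your proposal lands in essentially the same place as the paper: the paper gives no structural proof of this proposition at all, stating only that it ``can be observed with the implementation'' (all $11^4=14641$ multiplicity vectors in the box are run through $\thirdalgo$), and you likewise defer the decisive step --- determining the sign of each $\pm 1$ increment of $d$ --- to that same finite computation. Your added scaffolding (the substitution $(x,y)\mapsto(x+y,x-y)$ swapping $H_1\leftrightarrow H_3$, $H_2\leftrightarrow H_4$, plus the increment rule from Corollary \ref{cor:difI}) is correct and genuinely explanatory, though the proposition sits in the implementation subsection rather than the finite-field one, so your closing caveat about accidental vanishing over $\kk=F_q$ is not needed.
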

\begin{remark}
For this computation,
we used \risaasir{} version 20050209 (Kobe Distribution) on Linux machine  
(CPU: Intel(R) Celeron(R) CPU 2.26GHz,
Memory: 494M, 
bogomips: 4521.98).
The $14641$ examples was computed in $40$ minutes.
\end{remark}

\subsection*{Acknowledgments}
The author would like to thank Professor Hiroaki Terao
for suggesting Lemma \ref{finite:specialcase} and 
 Corollary \ref{finite:main} for special cases.

\end{document}